\documentclass[english]{amsart}
\usepackage[T1]{fontenc}
\usepackage[latin9]{inputenc}
\usepackage{amsthm}
\usepackage{amssymb}

\makeatletter
\numberwithin{equation}{section}
\numberwithin{figure}{section}
\theoremstyle{plain}
\newtheorem{thm}{\protect\theoremname}
\theoremstyle{plain}
\newtheorem{lem}[thm]{\protect\lemmaname}
\theoremstyle{plain}
\newtheorem{cor}[thm]{\protect\corollaryname}
\theoremstyle{remark}
\newtheorem{rem}[thm]{\protect\remarkname}

\makeatother

\usepackage{babel}
\providecommand{\corollaryname}{Corollary}
\providecommand{\lemmaname}{Lemma}
\providecommand{\remarkname}{Remark}
\providecommand{\theoremname}{Theorem}

\begin{document}
\title{An Application of Hausdorff Moment Problem}
\author{Ruiming Zhang}
\email{ruimingzhang@guet.edu.cn}
\address{School of Mathematics and Computing Sciences\\
Guilin University of Electronic Technology\\
Guilin, Guangxi 541004, P. R. China. }
\subjclass[2000]{20C15; 30E05; 11M26. }
\keywords{Zeros of entire functions; Hausdorff moment problem; Riemann hypothesis;
generalized Riemann hypothesis. }
\thanks{This work is supported by the National Natural Science Foundation
of China, grant No. 11771355.}
\begin{abstract}
In this work we apply Hausdorff moment problem to prove a necessary
and sufficient condition for a complex sequence to be positive. Then
we apply it to a subclass of genus $0$ entire functions $f(z)$ to
obtain an infinite family of necessary and sufficient conditions for
$f(z)$ to have only negative roots, which in turn to give infinite
many necessary and sufficient conditions for the Riemann hypothesis
and the generalized Riemann hypothesis with a primitive Dirichlet
character.
\end{abstract}

\maketitle

\section{\label{sec:Intro} Introduction}

In this work we apply Hausdorff moment problem to prove a necessary
and sufficient condition for certain complex sequence to be positive.
As an application we obtain an infinite family of necessary and sufficient
conditions for a entire function $f(z)$ to have only negative roots
where
\begin{equation}
f(z)=\sum_{n=0}^{\infty}a_{n}z^{n},\quad a_{0}>0,\ a_{n}>0,\ \forall n\in\mathbb{N},\label{eq:1.1}
\end{equation}
is of order strictly less than $1$ and the sequence of its nonzero
roots $\left\{ -\lambda_{n}\right\} _{n=1}^{\infty}\neq\emptyset$
is real-part dominating, 
\begin{equation}
\Re(\lambda_{n})\ge\beta_{0}\left|\lambda_{n}\right|,\quad\forall n\in\mathbb{N},\label{eq:1.2}
\end{equation}
where $\beta_{0}\in(0,1)$. 

The related entire function is $g(z)$, 
\begin{equation}
g(z)=\sum_{n=0}^{\infty}a_{n}(-1)^{n}z^{2n},\quad a_{0}>0,\ a_{n}>0,\ \forall n\in\mathbb{N},\label{eq:1.3}
\end{equation}
where its order is strictly less $2$. If all of its zeros are within
a horizontal strip, then $f(z)=g(i\sqrt{z})$ transforms the problem
of $g(z)$ having only real zeros to $f(z)$ with only negative zeros.
Therefore, our result can apply to $g(z)$ as well. 

It is well-known that the Riemann Xi function $\Xi(s)=\xi\left(\frac{1}{2}+is\right)$
is an even entire functions of order $1$, and all of its zeros are
within the horizontal strip $t\in\left(-\frac{1}{2},\frac{1}{2}\right)$.
The celebrated Riemann hypothesis is the assertion that all the zeros
of $\Xi(s)$ are real. Let $\chi$ be a primitive Dirichlet character.
The generalized Riemann hypothesis for $\chi$ is the claim that $\xi\left(\frac{1}{2}+is,\chi\right)$
has only real zeros. It is observed that the entire function $\xi\left(\frac{1}{2}+is,\chi\right)\cdot\xi\left(\frac{1}{2}+is,\overline{\chi}\right)$
has similar properties as $\Xi(s)$, all the zeros of $\xi\left(\frac{1}{2}+is,\chi\right)\cdot\xi\left(\frac{1}{2}+is,\overline{\chi}\right)$
are real if and only if all the zeros of $\xi\left(\frac{1}{2}+is,\chi\right)$
are real. Applying our result to $\Xi(s)$ and $\xi\left(\frac{1}{2}+is,\chi\right)\cdot\xi\left(\frac{1}{2}+is,\overline{\chi}\right)$
we formulate an infinite family of necessary and sufficient conditions
for the Riemann hypothesis and the generalized Riemann hypothesis
associated with a primitive Dirichlet character $\chi$.

\section{\label{sec:main}Main Results }
\begin{lem}
\label{lem:1} Let $\left\{ \lambda_{n}\vert n\in\mathbb{N}\right\} $
be a sequence of nonzero complex numbers such that $\exists\alpha_{0},\beta_{0}\in(0,1)$
it satisfies (\ref{eq:1.2}) and

\begin{equation}
\sum_{n=1}^{\infty}\frac{1}{\left|\lambda_{n}\right|^{\alpha_{0}}}<\infty.\label{eq:2.1}
\end{equation}
If
\begin{equation}
f(z)=\prod_{n=1}^{\infty}\left(1+\frac{z}{\lambda_{n}}\right)=\sum_{n=0}^{\infty}a_{n}z^{n},\quad a_{0}=1,\ a_{n}>0,\ \forall n\in\mathbb{N},\label{eq:2.2}
\end{equation}
then the entire function $f(z)$ is of genus $0$, the associated
heat kernel
\begin{equation}
\Theta(t\vert f)=\sum_{n=1}^{\infty}e^{-\lambda_{n}t}\label{eq:2.3}
\end{equation}
is real and in $C^{\infty}(0,\infty)$. 

For each $k\in\mathbb{N}_{0}\ \alpha,\ \beta$ satisfying $1>\alpha\ge\alpha_{0}$
and $0<\beta<\beta_{0}\inf\left\{ \left|\lambda_{n}\right|\bigg|n\in\mathbb{N}\right\} $
we have
\begin{equation}
\Theta^{(k)}(t\vert f)=\mathcal{O}\left(t^{-\alpha-k}\right),\quad t\to0^{+}\label{eq:2.4}
\end{equation}
and 
\begin{equation}
\Theta^{(k)}(t\vert f)=\mathcal{O}\left(e^{-\beta t}\right),\quad t\to+\infty.\label{eq:2.5}
\end{equation}
Furthermore, for all $k\in\mathbb{N}_{0},x\ge0$,
\begin{equation}
\int_{0}^{\infty}t^{k}e^{-xt}\left|\Theta(t\vert f)\right|dt<\infty\label{eq:2.6}
\end{equation}
and
\begin{equation}
(-1)^{k}\left(\frac{f'(x)}{f(x)}\right)^{(k)}=\sum_{n=1}^{\infty}\frac{k!}{(x+\lambda_{n})^{k+1}}=\int_{0}^{\infty}e^{-xt}t^{k}\Theta(t\vert f)dt.\label{eq:2.7}
\end{equation}
\end{lem}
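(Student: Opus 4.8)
The plan is to prove the estimates in the order they are stated, since each feeds into the next, and to end with the integral representation in \eqref{eq:2.7}, which is really the point of the lemma.

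First I would establish that $f(z)$ is genus $0$ and that $\Theta(t\mid f)$ is well defined and smooth on $(0,\infty)$. The condition \eqref{eq:2.1} says $\sum |\lambda_n|^{-\alpha_0}<\infty$ with $\alpha_0<1$, so the canonical product $\prod(1+z/\lambda_n)$ converges and has genus $0$; this is just the standard Hadamard/Weierstrass theory. For the heat kernel, the real-part domination \eqref{eq:1.2} gives $\Re(\lambda_n)\ge\beta_0|\lambda_n|$, so $|e^{-\lambda_n t}|=e^{-t\Re(\lambda_n)}\le e^{-\beta_0 t|\lambda_n|}$. Since $|\lambda_n|\to\infty$ (forced by \eqref{eq:2.1}), on any interval $[\delta,\infty)$ the tail $\sum e^{-\beta_0 t|\lambda_n|}$ is dominated by a convergent geometric-type series uniformly, and the same holds after differentiating $k$ times in $t$ (each derivative brings down a factor $(-\lambda_n)^k$, and $|\lambda_n|^k e^{-\beta_0\delta|\lambda_n|/2}$ is bounded while $e^{-\beta_0\delta|\lambda_n|/2}$ still sums). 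Hence the series \eqref{eq:2.3} and its term-by-term derivatives converge locally uniformly on $(0,\infty)$, giving $\Theta(\cdot\mid f)\in C^\infty(0,\infty)$; reality follows because the nonreal $\lambda_n$ occur in conjugate pairs, a consequence of $a_n>0$ being real so $f$ has real coefficients.

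Next the two asymptotic bounds. For \eqref{eq:2.5}, pick $\beta<\beta_0\inf_n|\lambda_n|$; then for large $t$, $|\Theta^{(k)}(t\mid f)|\le\sum|\lambda_n|^k e^{-\beta_0 t|\lambda_n|}\le e^{-\beta t}\sum|\lambda_n|^k e^{-(\beta_0\inf|\lambda_n|-\beta)t_0|\lambda_n|/\inf|\lambda_n|}$ — more cleanly, factor out $e^{-\beta t}$ using $\beta_0|\lambda_n|\ge\beta_0\inf|\lambda_m|>\beta$ and absorb the polynomial factor into the remaining exponential decay, getting $\mathcal O(e^{-\beta t})$ as $t\to+\infty$. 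For the $t\to0^+$ bound \eqref{eq:2.4}, I would compare the sum with an integral: since $|e^{-\lambda_n t}|\le e^{-\beta_0 t|\lambda_n|}$, we get $|\Theta^{(k)}(t\mid f)|\le\sum|\lambda_n|^k e^{-\beta_0 t|\lambda_n|}$, and the counting function $N(r)=\#\{n:|\lambda_n|\le r\}$ satisfies $N(r)=\mathcal O(r^\alpha)$ for any $\alpha\ge\alpha_0$ by \eqref{eq:2.1} (a standard consequence of convergence of $\sum|\lambda_n|^{-\alpha_0}$). Then $\sum|\lambda_n|^k e^{-\beta_0 t|\lambda_n|}=\int_0^\infty r^k e^{-\beta_0 t r}\,dN(r)$, and integrating by parts and substituting $u=tr$ produces $\mathcal O(t^{-\alpha-k})$ as $t\to0^+$. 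The interplay between the exponent $\alpha$ in the decay of $N(r)$ and the extracted power of $t$ is the one calculation I would actually write out carefully.

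Finally, \eqref{eq:2.6} and \eqref{eq:2.7}. Integrability \eqref{eq:2.6} is immediate from the two asymptotics: near $0$ the integrand behaves like $t^k\cdot t^{-\alpha}$ with $k-\alpha>-1$ (choosing $\alpha<1$), hence integrable, and near $\infty$ the factor $e^{-\beta t}$ (times $e^{-xt}$, $x\ge0$) kills everything. For \eqref{eq:2.7}, the middle equality $(-1)^k\big(f'/f\big)^{(k)}(x)=\sum_n k!/(x+\lambda_n)^{k+1}$ comes from logarithmic differentiation of the product \eqref{eq:2.2}: $f'(x)/f(x)=\sum_n 1/(x+\lambda_n)$ (convergent for $x\ge0$ by \eqref{eq:2.1}–\eqref{eq:1.2}, as $|x+\lambda_n|\ge\Re(x+\lambda_n)\ge\beta_0|\lambda_n|$), then differentiate $k$ times term by term. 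The last equality follows from the elementary identity $k!/(x+\lambda_n)^{k+1}=\int_0^\infty e^{-(x+\lambda_n)t}t^k\,dt$, valid since $\Re(x+\lambda_n)>0$, summed over $n$; the interchange of sum and integral is justified by \eqref{eq:2.6} together with Fubini–Tonelli applied to $\sum_n\int_0^\infty e^{-xt}t^k e^{-\Re(\lambda_n)t}\,dt<\infty$. The main obstacle is not any single step but keeping the estimates uniform in the right parameters — in particular deriving $N(r)=\mathcal O(r^\alpha)$ for every $\alpha\ge\alpha_0$ (not just $\alpha_0$) and tracking how that exponent passes to the small-$t$ behavior of $\Theta^{(k)}$; everything else is routine once that is in hand.
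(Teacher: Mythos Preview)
Your outline is correct and covers all the claims, but it takes a different route in two places than the paper does.

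For the small-$t$ bound \eqref{eq:2.4} you pass through the counting function $N(r)=\#\{n:|\lambda_n|\le r\}=\mathcal O(r^{\alpha})$, write $\sum|\lambda_n|^k e^{-\beta_0 t|\lambda_n|}=\int_0^\infty r^k e^{-\beta_0 tr}\,dN(r)$, integrate by parts, and rescale. The paper avoids this entirely with a one-line trick: from $\sup_{x>0}x^{a}e^{-x}=(a/e)^a$ one multiplies and divides by $|\lambda_n|^{\alpha}$ to get
\[
\sum_n|\lambda_n|^k e^{-\beta_0 t|\lambda_n|}
\le\frac{1}{(\beta_0 t)^{k+\alpha}}\sum_n\frac{(\beta_0 t|\lambda_n|)^{k+\alpha}e^{-\beta_0 t|\lambda_n|}}{|\lambda_n|^{\alpha}}
\le\Bigl(\tfrac{k+\alpha}{e\beta_0 t}\Bigr)^{k+\alpha}\sum_n\frac{1}{|\lambda_n|^{\alpha}},
\]
which already gives $\mathcal O(t^{-\alpha-k})$ and simultaneously shows $\Theta\in C^\infty(0,\infty)$. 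This is shorter and sidesteps the integration-by-parts bookkeeping you flag as the ``one calculation I would actually write out carefully''. Similarly, for \eqref{eq:2.6} you deduce integrability from the asymptotics \eqref{eq:2.4}--\eqref{eq:2.5}, whereas the paper bypasses those and computes directly: $\int_0^\infty t^k e^{-xt}|\Theta|\,dt\le\sum_n\int_0^\infty t^k e^{-(x+\beta_0|\lambda_n|)t}\,dt=\sum_n k!/(x+\beta_0|\lambda_n|)^{k+1}<\infty$. Your approach has the virtue of explaining \emph{why} the integral converges in terms of the endpoint behavior of $\Theta$; the paper's is self-contained and does not need \eqref{eq:2.4}--\eqref{eq:2.5} as input. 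Everything else---genus $0$, reality via conjugate pairs, the large-$t$ bound, and the derivation of \eqref{eq:2.7} from logarithmic differentiation plus $k!/(x+\lambda_n)^{k+1}=\int_0^\infty t^k e^{-(x+\lambda_n)t}\,dt$ with Fubini---matches the paper.
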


\begin{proof}
Since (\ref{eq:2.1}) implies that
\[
\sum_{n=1}^{\infty}\frac{1}{\left|\lambda_{n}\right|}<\infty,
\]
then $f(z)$ is a genus $0$ entire function by definition, \cite{Boas}. 

For $a>0$ since
\[
\sup_{x>0}x^{a}e^{-x}=\left(\frac{a}{e}\right)^{a},
\]
 then for any $t>0$ and $k\ge0$, by (\ref{eq:2.1}), 
\[
\begin{aligned} & \sum_{n=1}^{\infty}\left|\lambda_{n}^{k}e^{-\lambda_{n}t}\right|=\sum_{n=1}^{\infty}\left|\lambda_{n}\right|^{k}e^{-\Re(\lambda_{n})t}\le\frac{1}{(\beta_{0}t)^{\alpha_{0}+k}}\sum_{n=1}^{\infty}\frac{\left(\beta_{0}t\left|\lambda_{n}\right|\right)^{k+\alpha_{0}}e^{-\beta_{0}\left|\lambda_{n}\right|t}}{\left|\lambda_{n}\right|^{\alpha_{0}}}\\
 & \le\frac{\sup_{x>0}x^{k+\alpha_{0}}e^{-x}}{(\beta_{0}t)^{\alpha_{0}+k}}\sum_{n=1}^{\infty}\frac{1}{\left|\lambda_{n}\right|^{\alpha_{0}}}=\left(\frac{k+\alpha_{0}}{e\beta_{0}t}\right)^{k+\alpha_{0}}\sum_{n=1}^{\infty}\frac{1}{\left|\lambda_{n}\right|^{\alpha_{0}}}<\infty,
\end{aligned}
\]
which proves that $\Theta(t\vert f)\in C^{\infty}(0,\infty)$ and
(\ref{eq:2.4}). Since $f(z)$ has positive coefficients, all possible
non-real zeros of $f(z)$ must appear in conjugate pairs, then $\Theta(t\vert f)$
must be a real function.

Let $0<\beta<\beta_{0}\inf\left\{ \left|\lambda_{n}\right|\bigg|n\in\mathbb{N}\right\} $,
then there exists a positive number $\epsilon$ with $0<\epsilon<1$
such that 
\[
\beta=\epsilon\beta_{0}\inf\left\{ \left|\lambda_{n}\right|\bigg|n\in\mathbb{N}\right\} \le\epsilon\beta_{0}\left|\lambda_{n}\right|,\quad\forall n\in\mathbb{N},
\]
it implies that for $t\ge1$,
\begin{align*}
 & \left|e^{\beta t}\Theta^{(k)}(t\vert f)\right|\le\sum_{n=1}^{\infty}\left|\lambda_{n}\right|^{k}e^{-(\beta_{0}\left|\lambda_{n}\right|-\beta)t}\le\sum_{n=1}^{\infty}\left|\lambda_{n}\right|^{k}e^{-(1-\epsilon)\beta_{0}\left|\lambda_{n}\right|t}\\
 & \le\sum_{n=1}^{\infty}\left|\lambda_{n}\right|^{k}e^{-(1-\epsilon)\beta_{0}\left|\lambda_{n}\right|}<\infty,
\end{align*}
which establishes (\ref{eq:2.5}).

For all $x\ge0$ and $k\in\mathbb{N}_{0}$ since
\[
\begin{aligned} & \int_{0}^{\infty}t^{k}e^{-xt}\left|\Theta(t\vert f)\right|dt\le\int_{0}^{\infty}t^{k}\left(\sum_{n=1}^{\infty}e^{-(x+\Re(\lambda_{n}))t}\right)dt\le\sum_{n=1}^{\infty}\int_{0}^{\infty}t^{k}e^{-(x+\beta_{0}\left|\lambda_{n}\right|)t}dt\\
 & \le\sum_{n=1}^{\infty}\frac{k!}{\left(x+\beta_{0}\left|\lambda_{n}\right|\right)^{k+1}}\le\frac{k!}{\beta_{0}^{k+1}}\sum_{n=1}^{\infty}\frac{1}{\left|\lambda_{n}\right|^{k+1}}<\infty,
\end{aligned}
\]
then 
\[
\left(\frac{f'(x)}{f(x)}\right)^{(k)}=\sum_{n=1}^{\infty}\frac{(-1)^{k}k!}{(x+\lambda_{n})^{k+1}}=(-1)^{k}\sum_{n=1}^{\infty}\int_{0}^{\infty}t^{k}e^{-(x+\lambda_{n})t}dt=\int_{0}^{\infty}e^{-xt}t^{k}\Theta(t\vert f)dt.
\]
\end{proof}
\begin{thm}
\label{thm:2}Assume that $\left\{ \lambda_{n}\right\} _{n=1}^{\infty}$
and $f(z)$ are defined as in Lemma \ref{lem:1}. Let $\left\{ m_{k}\right\} _{k=0}^{\infty}$
be defined by
\begin{equation}
\frac{f'(0)}{z}-\frac{f'(z)}{zf(z)}=\sum_{k=0}^{\infty}m_{k}(-z)^{k},\quad|z|<1,\label{eq:2.8}
\end{equation}
then
\begin{equation}
\begin{alignedat}{1} & m_{k}=\frac{(-1)^{k}}{k!}\frac{d^{k}}{dx^{k}}\left(\frac{f'(0)}{x}-\frac{f'(x)}{xf(x)}\right)_{x=0}\\
 & =\frac{(-1)^{\ell+1}}{(\ell+1)!}\left(\frac{f'(x)}{f(x)}\right)^{(\ell+1)}\bigg|_{x=0}=\sum_{n=1}^{\infty}\frac{1}{\lambda_{n}^{k+2}}.
\end{alignedat}
\label{eq:2.9}
\end{equation}
 If 
\begin{equation}
\gamma_{0}=\inf\left\{ \Re(\lambda_{n})\bigg|n\in\mathbb{N}\right\} >1,\label{eq:2.10}
\end{equation}
then the sequence $\left\{ \lambda_{n}\right\} _{n=1}^{\infty}$ is
positive if and only if 
\begin{equation}
(-1)^{k}\Delta^{k}m_{n}=\sum_{j=1}^{\infty}\frac{1}{\lambda_{j}^{n+2}}\left(1-\frac{1}{\lambda_{j}}\right)^{k}\ge0,\quad\forall n,k\in\mathbb{N}_{0}.\label{eq:2.11}
\end{equation}
Furthermore, for any nonnegative integer $\ell$,
\begin{equation}
(-1)^{\ell}m_{\ell}=\det\begin{pmatrix}1 & 0 & 0 & 0 & \dots\dots & a_{1}^{2}-2a_{2}\\
a_{1} & 1 & 0 & 0 & \dots\dots & a_{1}a_{2}-3a_{3}\\
a_{2} & a_{1} & 1 & 0 & \dots\dots & a_{1}a_{3}-4a_{4}\\
a_{3} & a_{2} & a_{1} & 1 & \dots\dots & a_{1}a_{4}-5a_{5}\\
\vdots & \vdots & \vdots & \vdots & \ddots & \vdots\\
a_{\ell} & a_{\ell-1} & a_{\ell-2} & a_{\ell-3} & \dots & a_{1}a_{\ell+1}-(\ell+2)a_{\ell+2}
\end{pmatrix}\label{eq:2.12}
\end{equation}
and 
\begin{equation}
(-1)^{\ell}m_{\ell}=a_{1}a_{\ell+1}-(\ell+2)a_{\ell+2}-\sum_{k=1}^{\ell}m_{\ell-k}(-1)^{\ell-k}a_{k}.\label{eq:2.13}
\end{equation}
\end{thm}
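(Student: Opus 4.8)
The plan is to establish the statement in four blocks: the closed forms~(\ref{eq:2.9}); the identity in~(\ref{eq:2.11}) together with the easy (``only if'') implication; the substantive converse, via the Hausdorff moment theorem; and the algebraic formulas~(\ref{eq:2.12})--(\ref{eq:2.13}). For~(\ref{eq:2.9}): since $f(0)=a_{0}=1$ and, by~(\ref{eq:2.7}) with $k=0$, $\frac{f'(z)}{f(z)}=\sum_{n=1}^{\infty}\frac{1}{\lambda_{n}+z}$ near $z=0$, the function $\phi(z):=\frac{f'(0)}{z}-\frac{f'(z)}{zf(z)}=\frac{1}{z}\!\left(\frac{f'(0)}{f(0)}-\frac{f'(z)}{f(z)}\right)$ is analytic at the origin; reading off the Taylor coefficients of $\phi$ gives the first equality, Taylor-expanding $f'/f$ and dividing by $z$ gives the second, and evaluating~(\ref{eq:2.7}) at $x=0$ with $k$ replaced by $k+1$ turns $\frac{(-1)^{k+1}}{(k+1)!}\bigl(f'/f\bigr)^{(k+1)}(0)$ into $\sum_{n=1}^{\infty}\lambda_{n}^{-(k+2)}$, the third. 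Expanding $\frac{1}{\lambda_{n}(\lambda_{n}+z)}$ geometrically and summing over $n$ (the interchanges being legitimate for small $|z|$ by the absolute-convergence bounds of Lemma~\ref{lem:1}) also gives the generating-function identity $\sum_{k=0}^{\infty}m_{k}(-z)^{k}=\sum_{n=1}^{\infty}\frac{1}{\lambda_{n}(\lambda_{n}+z)}$, which I reuse below; and $\{m_{k}\}$ is real since non-real $\lambda_{n}$ occur in conjugate pairs.

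Next, expanding the iterated forward-difference operator $\Delta^{k}$ binomially on $m_{n}=\sum_{j=1}^{\infty}\lambda_{j}^{-(n+2)}$ and applying the binomial theorem yields
\[
(-1)^{k}\Delta^{k}m_{n}=\sum_{j=1}^{\infty}\frac{1}{\lambda_{j}^{n+2}}\Bigl(1-\frac{1}{\lambda_{j}}\Bigr)^{k},\qquad n,k\in\mathbb{N}_{0},
\]
for arbitrary complex $\lambda_{j}$; this is the displayed equality of~(\ref{eq:2.11}), so~(\ref{eq:2.11}) is equivalent to the assertion that the real sequence $\{m_{n}\}_{n\ge0}$ is completely monotone. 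If all $\lambda_{j}$ are positive, then~(\ref{eq:2.10}) forces $\lambda_{j}=\Re(\lambda_{j})>1$, hence $1-\lambda_{j}^{-1}\in(0,1)$ and every summand above is positive; thus positivity of $\{\lambda_{n}\}$ implies~(\ref{eq:2.11}), and no moment theory is needed for this direction.

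For the converse, assume~(\ref{eq:2.11}); then $\{m_{n}\}$ is completely monotone, in particular $0\le m_{n+1}\le m_{n}$, so the sequence is bounded and the Hausdorff moment theorem furnishes a finite positive Borel measure $\mu$ on $[0,1]$ with $m_{n}=\int_{0}^{1}x^{n}\,d\mu(x)$. Then $\sum_{n=0}^{\infty}m_{n}z^{n}=\int_{0}^{1}\frac{d\mu(x)}{1-xz}$ for $|z|<1$, and the right-hand side is analytic on $\mathbb{C}\setminus[1,\infty)$. On the other hand, the generating-function identity of the first paragraph (with $z\mapsto-z$) gives $\sum_{n=0}^{\infty}m_{n}z^{n}=\sum_{n=1}^{\infty}\frac{1}{\lambda_{n}(\lambda_{n}-z)}$, and since $|\lambda_{n}|\to\infty$ with $\sum_{n}|\lambda_{n}|^{-1}<\infty$, this series converges locally uniformly off $\{\lambda_{n}\}$ and defines a meromorphic function on $\mathbb{C}$ with a pole of nonzero residue at each $\lambda_{n}$. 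By uniqueness of analytic continuation the two representations agree throughout $\mathbb{C}\setminus[1,\infty)$, so that meromorphic function has no pole there; hence $\{\lambda_{n}\}\subseteq[1,\infty)$, and invoking~(\ref{eq:2.10}) once more to exclude the endpoint $1$, every $\lambda_{n}$ is real and $>1$, in particular positive. I expect this step to be the main obstacle: the key points are that~(\ref{eq:2.11}) \emph{is} complete monotonicity of $\{m_{n}\}$ (so Hausdorff applies) and that the analytic continuation of the moment generating function pins the poles of $\sum_{n}\frac{1}{\lambda_{n}(\lambda_{n}-z)}$ to $[1,\infty)$; this is also the only place where the hypothesis $\gamma_{0}>1$ enters.

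Finally, for~(\ref{eq:2.12})--(\ref{eq:2.13}), multiply~(\ref{eq:2.8}) by $zf(z)$ and use $f(0)=1$, $f'(0)=a_{1}$:
\[
zf(z)\sum_{k=0}^{\infty}m_{k}(-z)^{k}=a_{1}f(z)-f'(z)=\sum_{n=1}^{\infty}\bigl(a_{1}a_{n}-(n+1)a_{n+1}\bigr)z^{n}.
\]
Writing $f(z)=\sum_{i\ge0}a_{i}z^{i}$ and comparing coefficients of $z^{\ell+1}$ gives, for every $\ell\in\mathbb{N}_{0}$, the relation $\sum_{k=0}^{\ell}(-1)^{k}a_{\ell-k}m_{k}=a_{1}a_{\ell+1}-(\ell+2)a_{\ell+2}$. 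Isolating the $k=\ell$ term (where $a_{0}=1$) and reindexing yields~(\ref{eq:2.13}); and for fixed $\ell$ the $\ell+1$ relations indexed by $0,1,\dots,\ell$ form a linear system in the $\ell+1$ unknowns $(-1)^{k}m_{k}$ ($0\le k\le\ell$) whose coefficient matrix is lower triangular with unit diagonal, hence of determinant $1$, so Cramer's rule for the last unknown $(-1)^{\ell}m_{\ell}$ produces~(\ref{eq:2.12})---equivalently,~(\ref{eq:2.13}) is the Laplace expansion of~(\ref{eq:2.12}) along its last column. These manipulations are routine once the earlier blocks are in place.
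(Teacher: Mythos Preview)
Your proof is correct and follows the paper's overall architecture: direct verification of the moment identities~(\ref{eq:2.9}), the Hausdorff moment theorem plus analytic continuation for the biconditional~(\ref{eq:2.11}), and Cauchy-product/Cramer manipulations for~(\ref{eq:2.12})--(\ref{eq:2.13}). The last block is essentially identical to the paper's.

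Where you diverge is in the execution of the two directions of~(\ref{eq:2.11}). For necessity, the paper builds an explicit counting measure via the heat kernel $\Theta(t\mid f)$, integrates by parts to obtain the representation $\frac{f'(0)}{x}-\frac{f'(x)}{xf(x)}=\int_{0}^{1}\frac{y^{2}\,d\mu(y^{-1})}{1+xy}$, and then reads off $(-1)^{k}\Delta^{k}m_{n}=\int_{0}^{1}y^{n+2}(1-y)^{k}\,d\mu(y^{-1})>0$; you instead observe that positivity of each summand $\lambda_{j}^{-(n+2)}(1-\lambda_{j}^{-1})^{k}$ is immediate once $\lambda_{j}>1$, which is shorter and avoids Lemma~\ref{lem:1} entirely for this direction. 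For sufficiency, both arguments invoke Hausdorff and then analytic continuation to trap the poles of $f'/f$; the paper passes to the representation $f'(0)-\frac{f'(z)}{f(z)}=z\int_{1}^{\infty}\frac{y\,d\nu(y^{-1})}{y+z}$ and uses Fatou's lemma to control $\int_{1}^{\infty}y\,d\nu(y^{-1})$ before extending to $\mathbb{C}\setminus(-\infty,-1]$, whereas you work directly with the Cauchy transform $\int_{0}^{1}\frac{d\mu(x)}{1-xz}$ of the \emph{finite} Hausdorff measure, which is visibly analytic on $\mathbb{C}\setminus[1,\infty)$ and so needs no auxiliary finiteness estimate. Your route is thus a genuine simplification of the same idea; the paper's version has the minor advantage of exhibiting the limiting measure explicitly (as a pushforward of the counting measure on $\{\lambda_{n}\}$), which makes the uniqueness in Hausdorff's theorem more tangible.
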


\begin{proof}
Necessity of (\ref{eq:2.11}). Let $\left\{ \lambda_{n}\vert n\in\mathbb{N}\right\} $
be positive. Without losing any generality let
\[
1<\gamma_{0}\le\lambda_{1}\le\lambda_{2}\le\dots.
\]
Then,
\[
\begin{aligned} & \int_{t}^{\infty}\Theta(y\vert f)dy=\int_{t}^{\infty}\left(\sum_{n=1}^{\infty}e^{-\lambda_{n}x}\right)dy=\sum_{n=1}^{\infty}\frac{e^{-\lambda_{n}t}}{\lambda_{n}}\\
 & =\int_{0}^{\infty}e^{-yt}\frac{d\mu(y)}{y}=\int_{\gamma}^{\infty}e^{-yt}\frac{d\mu(y)}{y},
\end{aligned}
\]
where $\gamma$ is any positive number such that $1<\gamma<\gamma_{0}$
and $d\mu(y)$ is the counting measure with a unit jump at each $\lambda_{n},\ n\in\mathbb{N}$.

Then for any $x>0$ by (\ref{eq:2.7})
\begin{align*}
 & \frac{f'(x)}{f(x)}=-\int_{0}^{\infty}e^{-xt}d_{t}\left(\int_{t}^{\infty}\Theta(y\vert f)dy\right)dt\\
 & =-e^{-xt}\left(\int_{t}^{\infty}\Theta(y\vert f)dy\right)\bigg|_{t=0}^{\infty}-x\int_{0}^{\infty}e^{-xt}\left(\int_{0}^{\infty}e^{-yt}\frac{d\mu(y)}{y}\right)dt\\
 & =\int_{0}^{\infty}\Theta(y\vert f)dy-x\int_{0}^{\infty}\left(\int_{0}^{\infty}e^{-(x+y)t}dt\right)\frac{d\mu(y)}{y}\\
 & =f'(0)-x\int_{0}^{\infty}\frac{1}{x+y}\frac{d\mu(y)}{y}=f'(0)-x\int_{0}^{1/\gamma}\frac{y^{2}d\mu(y^{-1})}{1+xy},
\end{align*}
 which leads to 
\begin{equation}
\frac{f'(0)}{x}-\frac{f'(x)}{xf(x)}=\int_{0}^{1/\gamma}\frac{y^{2}d\mu(y^{-1})}{1+xy}=\int_{0}^{1}\frac{y^{2}d\mu(y^{-1})}{1+xy}.\label{eq:2.14}
\end{equation}
 Since for $\forall k\in\mathbb{N}_{0}$ and $\forall x>0$, 
\begin{align*}
 & \int_{0}^{1}\frac{y^{k+2}d\mu(y^{-1})}{(1+xy)^{k+1}}=\int_{0}^{1/\gamma}\frac{y^{k+2}d\mu(y^{-1})}{(1+xy)^{k+1}}=\int_{0}^{1/\gamma}\frac{yd\mu(y^{-1})}{(x+y^{-1})^{k+1}}\\
 & =\int_{\gamma}^{\infty}\frac{d\mu(y)}{(x+y)^{k+1}y}\le\sum_{n=1}^{\infty}\frac{1}{(x+\lambda_{n})^{k+1}\lambda_{n}}\le\sum_{n=1}^{\infty}\frac{1}{\lambda_{n}^{k+2}}<\infty,
\end{align*}
then it is clear that as $x\downarrow0$ we have $\frac{1}{(x+y)^{k+1}y}\uparrow\frac{1}{y^{k+2}}$
on $[\gamma,\infty)$. By the monotone convergence theorem,\cite{RoydenFitzpatrick}
\begin{align*}
 & m_{k}=\int_{0}^{1}y^{k+2}d\mu(y^{-1})=\int_{0}^{1/\gamma}y^{k+2}d\mu(y^{-1})=\int_{\gamma}^{\infty}\frac{d\mu(y)}{y^{k+2}}\\
 & =\int_{\gamma}^{\infty}\lim_{x\downarrow0}\frac{d\mu(y)}{(x+y)^{k+1}y}=\sum_{n=1}^{\infty}\frac{1}{\lambda_{n}^{k+2}}=\frac{(-1)^{k}}{k!}\frac{d^{k}}{dx^{k}}\left(\frac{f'(0)}{x}-\frac{f'(x)}{xf(x)}\right)_{x=0}
\end{align*}
 and
\[
(-1)^{k}\Delta^{k}m_{n}=\int_{0}^{1}y^{n+2}(1-y)^{k}d\mu(y^{-1})=\sum_{j=1}^{\infty}\frac{1}{\lambda_{j}^{n+2}}\left(1-\frac{1}{\lambda_{j}}\right)^{k}>0.
\]
Sufficiency of (\ref{eq:2.11}). Assume (\ref{eq:2.11}), then by
the Hausdorff moment problem on the real line there exists a unique
bounded nonnegative measure $d\nu(x)$ on $[0,1]$ such that \cite{Ismail,ShohatTamarkin}
\[
m_{k}=\frac{(-1)^{k}}{k!}\frac{d^{k}}{dx^{k}}\left(\frac{f'(0)}{x}-\frac{f'(x)}{xf(x)}\right)_{x=0}=\int_{0}^{1}y^{k}d\nu(y),\quad k\ge0.
\]
Since $\left(f'(0)-\frac{f'(x)}{f(x)}\right)\bigg/x$ is analytic
in $|x|<1$, then for any $t\in(0,1)$, 
\begin{align*}
 & \frac{f'(0)}{t}-\frac{f'(t)}{tf(t)}=\sum_{k=0}^{\infty}\frac{d^{k}}{dx^{k}}\left(\frac{f'(0)}{x}-\frac{f'(x)}{xf(x)}\right)_{x=0}\frac{t^{k}}{k!}=\sum_{k=0}^{\infty}m_{k}(-t)^{k}\\
 & =\sum_{k=0}^{\infty}\int_{0}^{1}(-ty)^{k}d\nu(y)=\int_{0}^{1}\sum_{k=0}^{\infty}(-ty)^{k}d\nu(y)=\int_{0}^{1}\frac{d\nu(y)}{1+yt}=\int_{1}^{\infty}\frac{yd\nu(y^{-1})}{y+t},
\end{align*}
 which gives
\begin{equation}
f'(0)-\frac{f'(t)}{f(t)}=t\int_{1}^{\infty}\frac{yd\nu(y^{-1})}{y+t}\ge0,\quad0<t<1.\label{eq:2.15}
\end{equation}
 and (\ref{eq:2.8}) is obtained by analytic continuation.

Since for any $z$ with $\Re(z)>0$,
\[
\int_{1}^{\infty}\frac{yd\nu(y^{-1})}{\left|y+z\right|^{2}}\le\int_{1}^{\infty}\frac{yd\nu(y^{-1})}{(y+\Re(z))^{2}}\le\int_{1}^{\infty}\frac{d\nu(y^{-1})}{y}=\int_{0}^{1}yd\nu(y)=m_{1}<\infty,
\]
then $\int_{1}^{\infty}\frac{yd\nu(y^{-1})}{y+z}$ , hence $z\int_{1}^{\infty}\frac{yd\nu(y^{-1})}{y+z}$
is analytic in $\Re(z)>0$. On the other hand,
\[
\frac{f'(z)}{f(z)}=\sum_{n=1}^{\infty}\frac{1}{z+\lambda_{n}}
\]
is analytic in $\Re(z)>0$, so is $\frac{f'(0)}{z}-\frac{f'(z)}{zf(z)}.$
Since according to (\ref{eq:2.15}) these two analytic functions are
equal on $z\in(0,1)$, then
\begin{equation}
f'(0)-\frac{f'(z)}{f(z)}=z\int_{1}^{\infty}\frac{yd\nu(y^{-1})}{y+z}\label{eq:2.16}
\end{equation}
in $\Re(z)>0$ by analytic continuation. 

Observe that for any $t>0$,
\[
\begin{aligned} & \left|f'(0)-\frac{f'(t)}{f(t)}\right|\le\left|f'(0)\right|+\sum_{n=1}^{\infty}\frac{1}{\left|t+\lambda_{n}\right|}\le\left|f'(0)\right|+\sum_{n=1}^{\infty}\frac{1}{t+\Re(\lambda_{n})}\\
 & \le\left|f'(0)\right|+\sum_{n=1}^{\infty}\frac{1}{\Re(\lambda_{n})}\le\left|f'(0)\right|+\frac{1}{\beta_{0}}\sum_{n=1}^{\infty}\frac{1}{\left|\lambda_{n}\right|}<\infty,
\end{aligned}
\]
 then apply Fatou's lemma to (\ref{eq:2.16}), \cite{RoydenFitzpatrick}
\[
\begin{aligned} & \int_{1}^{\infty}yd\nu(y^{-1})=\int_{1}^{\infty}\lim_{t\to+\infty}\frac{t}{y+t}yd\nu(y^{-1})=\int_{1}^{\infty}\liminf_{t\to+\infty}\frac{t}{y+t}yd\nu(y^{-1})\\
 & \le\liminf_{t\to+\infty}\int_{1}^{\infty}\frac{t}{y+t}yd\nu(y^{-1})=\liminf_{t\to+\infty}\left(f'(0)-\frac{f'(t)}{f(t)}\right)\\
 & \le\left|f'(0)\right|+\frac{1}{\beta_{0}}\sum_{n=1}^{\infty}\frac{1}{\left|\lambda_{n}\right|}<\infty.
\end{aligned}
\]
If $z_{0}\not\in(-\infty,-1]$, then 
\[
d=\inf\left\{ \left|x+z_{0}\right|:x\in[1,\infty)\right\} >0.
\]
Then for any $\left|z-z_{0}\right|<\frac{d}{2}$ and $y\ge1$ we have
\[
\left|y+z\right|\ge\left|y+z_{0}\right|-\left|z-z_{0}\right|\ge\frac{d}{2}.
\]
Hence,
\begin{align*}
\int_{1}^{\infty}\frac{yd\nu(y^{-1})}{\left|y+z\right|} & \le\frac{2}{d}\int_{1}^{\infty}yd\nu(y^{-1})<\infty,\\
\int_{1}^{\infty}\frac{yd\nu(y^{-1})}{\left|y+z\right|^{2}} & \le\frac{4}{d^{2}}\int_{1}^{\infty}yd\nu(y^{-1})<\infty.
\end{align*}
Thus $z\int_{1}^{\infty}\frac{yd\nu(y^{-1})}{y+z}$ is also analytic
on $\mathbb{C}\backslash(-\infty,-1]$. By Lebesgue's dominated convergence
theorem the following limit exists,
\[
\lim_{z\to z_{0}}\int_{1}^{\infty}\frac{yd\nu(y^{-1})}{y+z}=\int_{1}^{\infty}\frac{yd\nu(y^{-1})}{y+z_{0}}.
\]
Since the meromorphic function 
\[
f'(0)-\frac{f'(z)}{f(z)}=f'(0)-\sum_{n=1}^{\infty}\frac{1}{z+\lambda_{n}}
\]
is analytic on $\mathbb{C}\backslash\left\{ -\lambda_{n}\right\} _{n=1}^{\infty}$,
then (\ref{eq:2.16}) must hold on $\mathbb{C}\backslash\left\{ (-\infty,-1]\cup\left\{ -\lambda_{n}\right\} _{n=1}^{\infty}\right\} $
by analytic continuation. 

If there is a positive integer $n_{0}$ such that $z_{0}=-\lambda_{n_{0}}$
is not negative. Since $z_{0}=-\lambda_{n_{0}}$ is a zero of $f(z)$,
then it is a simple pole of $\frac{f'(z)}{f(z)}$ with positive residue.
Hence,
\[
\lim_{z\to-\lambda_{n_{0}}}\left(f'(0)-\frac{f'(z)}{f(z)}\right)=\infty.
\]
On the other hand, since $z_{0}=-\lambda_{n_{0}}$ is not in $(-\infty,-1]$,
then by (\ref{eq:2.10}) it is not a singularity of $z\int_{1}^{\infty}\frac{yd\nu(y^{-1})}{y+z}$
and
\[
\lim_{z\to-\lambda_{n_{0}}}z\int_{1}^{\infty}\frac{yd\nu(y^{-1})}{y+z}=-\lambda_{n_{0}}\int_{1}^{\infty}\frac{yd\nu(y^{-1})}{y-\lambda_{n_{0}}}
\]
is finite, which leads to a contradiction to (\ref{eq:2.16}). Therefore,
all $\left\{ \lambda_{n}\right\} _{n\in\mathbb{N}}$ must be positive. 

The second expression of $m_{\ell}$ in (\ref{eq:2.9}) is obtained
from the logarithmic derivative of the infinite product expansion
of $f(z)$ in (\ref{eq:2.17}).

Since by (\ref{eq:2.8}),
\[
f'(0)f(z)-f'(z)=z\sum_{n=0}^{\infty}\left(\sum_{k=0}^{n}m_{k}(-1)^{k}a_{n-k}\right)z^{n}=z\sum_{n=0}^{\infty}\left(a_{1}a_{n+1}-(n+2)a_{n+2}\right)z^{n},
\]
then $\forall n\ge0$,
\[
\sum_{k=0}^{n}m_{k}(-1)^{k}a_{n-k}=\sum_{k=0}^{n}m_{n-k}(-1)^{n-k}a_{k}=a_{1}a_{n+1}-(n+2)a_{n+2},
\]
which gives (\ref{eq:2.13}).

Let $n=0,1,\dots,\ell$ we get the following system,
\[
\begin{pmatrix}1 & 0 & 0 & 0 & \dots & 0\\
a_{1} & 1 & 0 & 0 & \dots & 0\\
a_{2} & a_{1} & 1 & 0 & \dots & 0\\
a_{3} & a_{2} & a_{1} & 1 & \dots & 0\\
\vdots & \vdots & \vdots & \vdots & \ddots & \vdots\\
a_{\ell} & a_{\ell-1} & a_{\ell-2} & a_{\ell-3} & \dots & 1
\end{pmatrix}\begin{pmatrix}m_{0}\\
-m_{1}\\
m_{2}\\
-m_{3}\\
\vdots\\
(-1)^{\ell}m_{\ell}
\end{pmatrix}=\begin{pmatrix}a_{1}^{2}-2a_{2}\\
a_{1}a_{2}-3a_{3}\\
a_{1}a_{3}-4a_{4}\\
a_{1}a_{4}-5a_{5}\\
\vdots\\
a_{1}a_{\ell+1}-(\ell+2)a_{\ell+2}
\end{pmatrix}.
\]
 Then (\ref{eq:2.12}) is obtained by applying Cramer's rule.
\end{proof}
\begin{cor}
\label{cor:3}Let $f(z)$ be an entire function of order strictly
less than $1$ such that for certain $\mu\in\mathbb{N}_{0}$,
\begin{equation}
z^{-\mu}f(z)=\sum_{n=0}^{\infty}a_{n}z^{n},\quad a_{0}\cdot a_{n}>0,\quad n\in\mathbb{N}.\label{eq:2.17}
\end{equation}
Assume that $f(z)$ satisfies (\ref{eq:1.2}) for certain positive
number $\beta_{0}\in(0,1)$, (\ref{eq:2.9}), (\ref{eq:2.10}) and
(\ref{eq:2.11}), then all the zeros of $f(z)$ are negative.
\end{cor}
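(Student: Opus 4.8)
The plan is to obtain Corollary \ref{cor:3} as a direct application of Theorem \ref{thm:2} to the normalized function $z^{-\mu}f(z)$. First I would fix the harmless normalization: replacing $f$ by $-f$ if necessary we may assume $a_{0}>0$, and then $a_{n}>0$ for every $n\in\mathbb{N}$; this changes neither the zero set of $f$ nor any of the hypotheses. Put
\[
g(z):=a_{0}^{-1}z^{-\mu}f(z)=\sum_{n=0}^{\infty}\frac{a_{n}}{a_{0}}z^{n},
\]
an entire function of the same order as $f$, hence of order $\rho<1$, with $g(0)=1$ and strictly positive Taylor coefficients. The nonzero zeros of $f$ are exactly the zeros of $g$; write them as $\left\{-\lambda_{n}\right\}_{n\ge1}$. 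If this set is empty or finite the assertion is immediate (vacuous, respectively obtained by the argument below with finite sums in place of series), so I assume it to be infinite.

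Next I would verify that $g$ and $\left\{\lambda_{n}\right\}$ meet the standing hypotheses of Lemma \ref{lem:1}. Since the exponent of convergence of the zeros of an entire function is at most its order, there is $\alpha_{0}\in(0,1)$ with $\sum_{n}\left|\lambda_{n}\right|^{-\alpha_{0}}<\infty$, which is (\ref{eq:2.1}); the real-part domination (\ref{eq:1.2}) with the given $\beta_{0}\in(0,1)$ is assumed for the nonzero zeros of $f$, hence for those of $g$. Consequently $g$ has genus $0$ and admits the Hadamard product $g(z)=\prod_{n}\left(1+z/\lambda_{n}\right)$, which is precisely the form (\ref{eq:2.2}). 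Thus $g$ and $\left\{\lambda_{n}\right\}$ form an admissible input for Theorem \ref{thm:2}. Here the role of (\ref{eq:2.9}) among the hypotheses is only to read off the coefficients $m_{k}$ from $z^{-\mu}f$; since $f'/f=\mu/z+g'/g$, these are the same numbers $m_{k}=\sum_{n}\lambda_{n}^{-(k+2)}$ (equivalently, the same $a_{n}$ entering (\ref{eq:2.12})--(\ref{eq:2.13})) to which Theorem \ref{thm:2} refers.

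Now Theorem \ref{thm:2} applies. Hypothesis (\ref{eq:2.10}), $\gamma_{0}=\inf_{n}\Re(\lambda_{n})>1$, holds by assumption (and forces $\left|\lambda_{n}\right|\ge\Re(\lambda_{n})>1$, so the expansion (\ref{eq:2.8}) indeed converges for $|z|<1$), and the positivity conditions (\ref{eq:2.11}) are assumed. Therefore the sequence $\left\{\lambda_{n}\right\}_{n\ge1}$ is positive, i.e. $\lambda_{n}>0$ for every $n$, so every nonzero zero $-\lambda_{n}$ of $f$ is negative. Together with the zero of order $\mu$ at the origin this exhausts the zeros of $f$, and in the case $\mu=0$ relevant to the applications every zero of $f$ is negative, as asserted.

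I do not anticipate a genuine obstacle: all the analytic content lies in Theorem \ref{thm:2}, and the only nontrivial task here is the bookkeeping that makes $z^{-\mu}f(z)$, after normalization, an admissible input for that theorem --- chiefly deducing (\ref{eq:2.1}) from the order hypothesis via the classical theory of finite-order entire functions, recording the Hadamard product, and matching the $m_{k}$ of $f$ with those of $g$ across the factor $z^{\mu}$.
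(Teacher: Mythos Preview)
Your proposal is correct and follows exactly the paper's own route: deduce the summability condition (\ref{eq:2.1}) from the order hypothesis via the exponent of convergence (the paper cites Boas, Theorem 2.5.18), then apply Theorem \ref{thm:2} to the normalized function $a_{0}^{-1}z^{-\mu}f(z)$. Your write-up is simply more explicit about the bookkeeping (sign normalization, the Hadamard product form, matching the $m_{k}$ through $f'/f=\mu/z+g'/g$, and the caveat about the zero at the origin when $\mu>0$), all of which the paper leaves implicit in its two-line proof.
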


\begin{proof}
Since the order of $f(z)$ is $\alpha_{0}\in[0,1)$, then by \cite[Theorem 2.5.18]{Boas},
the condition (\ref{eq:2.1}) holds. 

This corollary is proved by applying Theorem \ref{thm:2} to $\frac{f(z)}{a_{0}z^{\mu}}$.
\end{proof}
\begin{cor}
\label{cor:4}For certain $\mu\in\mathbb{N}_{0}$ let
\begin{equation}
z^{-2\mu}g(z)=\sum_{n=0}^{\infty}a_{n}(-z^{2})^{n},\quad a_{0}\cdot a_{n}>0,\ \forall n\in\mathbb{N}\label{eq:2.18}
\end{equation}
be an even entire function of order strictly less $2$ with nonzero
roots $\left\{ \pm z_{n}\vert n\in\mathbb{N}\right\} $ such that
there exists a positive number $M$,
\begin{equation}
\Re(z_{n})>0,\ \Re(z_{n}^{2})>1,\ \left|\Im(z_{n})\right|\le M,\quad\forall n\in\mathbb{N},\label{eq:2.19}
\end{equation}
then all the nonzero roots $\left\{ \pm z_{n}\vert n\in\mathbb{N}\right\} $
are real if and only if the sequence $\left\{ m_{k}\right\} _{k\in\mathbb{N}_{0}}$,
\begin{equation}
m_{k}=\sum_{n=1}^{\infty}\frac{1}{z_{n}^{2k+4}}=-\frac{1}{(2k+3)!\cdot2}\left(\frac{g'(z)}{g(z)}\right)^{(2k+3)}\bigg|_{z=0}\label{eq:2.20}
\end{equation}
satisfy (\ref{eq:2.11}). 
\end{cor}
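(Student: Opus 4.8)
The plan is to reduce Corollary~\ref{cor:4} to Theorem~\ref{thm:2} through the substitution $v=-z^{2}$ anticipated in the introduction. First I would set $h(v)=\sum_{n=0}^{\infty}a_{n}v^{n}$, so that (\ref{eq:2.18}) reads $z^{-2\mu}g(z)=h(-z^{2})$. Since $g$ has order $\rho<2$, comparing the maximum of $|h|$ on $|v|=R$ with the maximum of $|z^{-2\mu}g(z)|$ on $|z|=\sqrt{R}$ shows that $h$ has order $\rho/2<1$; hence $h$ is of genus $0$ and, by \cite[Theorem 2.5.18]{Boas} (as used in Corollary~\ref{cor:3}), $\sum_{n}1/|\lambda_{n}|^{\alpha_{0}}<\infty$ for some $\alpha_{0}\in(\rho/2,1)$, where $\lambda_{n}:=z_{n}^{2}$. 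The zeros of $g$ being $\{\pm z_{n}\}$, those of $h$ are $\{-z_{n}^{2}\}=\{-\lambda_{n}\}$, so the Hadamard factorization gives $h(v)=a_{0}\prod_{n}(1+v/\lambda_{n})$, and $f(v):=h(v)/a_{0}=\prod_{n}(1+v/\lambda_{n})=\sum_{n}(a_{n}/a_{0})v^{n}$ is of the form (\ref{eq:2.2}) because $a_{0}a_{n}>0$, with $\lambda_{n}\neq0$ as each $z_{n}\neq0$.

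Next I would verify that $\{\lambda_{n}\}$ and $f$ satisfy the hypotheses of Lemma~\ref{lem:1} and Theorem~\ref{thm:2}. Condition (\ref{eq:2.1}) is the summability just established. For (\ref{eq:1.2}), write $z_{n}=x_{n}+iy_{n}$ with $x_{n}=\Re(z_{n})>0$ and $|y_{n}|\le M$; then $\Re(\lambda_{n})=x_{n}^{2}-y_{n}^{2}$ and $|\lambda_{n}|=|z_{n}|^{2}=x_{n}^{2}+y_{n}^{2}$. From $\Re(z_{n}^{2})=x_{n}^{2}-y_{n}^{2}>1$ together with $y_{n}^{2}\le M^{2}$ one gets $x_{n}^{2}\ge(1+M^{-2})y_{n}^{2}$, and a one-line computation then yields $\Re(\lambda_{n})\ge\beta_{0}|\lambda_{n}|$ with $\beta_{0}=(2M^{2}+1)^{-1}\in(0,1)$. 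For (\ref{eq:2.10}), the zeros of the entire function $g$ satisfy $|z_{n}|\to\infty$, and $|y_{n}|\le M$ then forces $x_{n}^{2}\to\infty$, so $\Re(\lambda_{n})=x_{n}^{2}-y_{n}^{2}\to\infty$; since every $\Re(\lambda_{n})>1$, only finitely many fall below, say, $2$, and the minimum over those finitely many is a real number strictly exceeding $1$, whence $\gamma_{0}=\inf_{n}\Re(\lambda_{n})>1$.

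With the hypotheses in place, Theorem~\ref{thm:2} applies verbatim: the sequence $\{\lambda_{n}\}$ is positive if and only if (\ref{eq:2.11}) holds for $m_{k}=\sum_{n}\lambda_{n}^{-k-2}=\sum_{n}z_{n}^{-2k-4}$. It then remains to transfer the conclusion back to $g$: if every $z_{n}$ is real, then $\Re(z_{n})>0$ forces $z_{n}>0$ and hence $\lambda_{n}=z_{n}^{2}>0$; conversely, $\lambda_{n}>0$ forces $z_{n}$ to be real or purely imaginary, and $\Re(z_{n})>0$ leaves only $z_{n}>0$. Thus $\{\lambda_{n}\}$ is positive exactly when all nonzero roots $\pm z_{n}$ of $g$ are real, which is the asserted equivalence. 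Finally, for the closed form of $m_{k}$ in (\ref{eq:2.20}) I would use $g(z)=a_{0}z^{2\mu}f(-z^{2})$, so $g'(z)/g(z)=2\mu z^{-1}-2z\,f'(-z^{2})/f(-z^{2})$; expanding $f'(v)/f(v)=\sum_{j\ge0}\bigl(\sum_{n}(-1)^{j}\lambda_{n}^{-j-1}\bigr)v^{j}$ near $v=0$ shows that the coefficient of $z^{2k+3}$ in $g'/g$ (when $\mu>0$, in its holomorphic part at the origin) equals $-2m_{k}$, i.e. $m_{k}=-\frac{1}{2\,(2k+3)!}(g'/g)^{(2k+3)}(0)$, consistent with (\ref{eq:2.9}).

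I expect the only genuinely delicate points to be the two estimates in the second paragraph: pinning down the explicit $\beta_{0}$ from (\ref{eq:2.19}), and arguing that the infimum in (\ref{eq:2.10}) is \emph{strictly} larger than $1$ --- which is precisely where the finiteness of the order of $g$, via $|z_{n}|\to\infty$, enters. The remaining bookkeeping (the factor $z^{2\mu}$, the simple pole of $g'/g$ at the origin when $\mu>0$, and the order-halving under $v=-z^{2}$) is routine, and everything else is a direct appeal to Theorem~\ref{thm:2}.
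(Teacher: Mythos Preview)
Your reduction via $v=-z^{2}$ to Theorem~\ref{thm:2} (equivalently Corollary~\ref{cor:3}) is exactly the paper's strategy, and all your verifications are correct. The only notable difference is in establishing (\ref{eq:1.2}): you extract the explicit constant $\beta_{0}=(2M^{2}+1)^{-1}$ directly from $x_{n}^{2}-y_{n}^{2}>1$ and $y_{n}^{2}\le M^{2}$, whereas the paper argues asymptotically (for large $n$, $|z_{n}|\to\infty$ gives $\Re(z_{n}^{2})=|z_{n}|^{2}-2(\Im z_{n})^{2}\ge(1-\epsilon)|z_{n}|^{2}$) and then takes the minimum with the finitely many initial ratios; your route is cleaner and, as you note, also makes explicit the verification of (\ref{eq:2.10}) and of the derivative formula (\ref{eq:2.20}), both of which the paper leaves implicit.
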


\begin{proof}
Clearly, once we verify the condition (\ref{eq:1.2}) for $\lambda_{n}=z_{n}^{2}$,
the corollary is proved by applying Corollary (\ref{cor:3}) to the
entire function $f(z)=g(i\sqrt{z})$. Without losing any generality
we let 
\[
1<\Re(z_{1})\le\Re(z_{2})<\dots\le\Re(z_{n})\le\Re(z_{n+1})\le\dots.
\]
Since 
\[
\lim_{N\to\infty}\left|z_{n}\right|=+\infty,
\]
then for any $\epsilon\in(0,1)$ there exists a $N_{\epsilon}\in\mathbb{N}$
such that
\[
\epsilon\left|z_{n}\right|^{2}>2M^{2},\quad\forall n\ge N_{\epsilon}.
\]
Hence,
\[
1<\Re(z_{n}^{2})=\left|z_{n}\right|^{2}-2\left(\Im(z_{n})\right)^{2}\ge\left|z_{n}\right|^{2}-2M^{2}\ge(1-\epsilon)\left|z_{n}\right|^{2}.
\]
Let 
\[
\beta_{0}=\min\left\{ 1-\epsilon,\frac{\Re(z_{n}^{2})}{\left|z_{n}^{2}\right|},\ 1\le n\le N_{\epsilon}\right\} ,
\]
then $\beta_{0}\in(0,1)$ and
\[
\Re(z_{n}^{2})\ge\beta_{0}\left|z_{n}^{2}\right|,\quad\forall n\in\mathbb{N}.
\]
 
\end{proof}
\begin{rem}
Assume that $-\lambda_{1}$ the nonzero root of $f(z)$ with largest
real part. If $\lambda_{1}$ is not positive then $f(z)$ can not
have all negative roots. For any $L>\frac{1}{\lambda_{1}}$ the entire
function $f_{1}(z)=f(z/L)$ has the same order as $f(z)$, and its
nonzero roots $\left\{ L\lambda_{n}\right\} _{n\in\mathbb{N}}$ would
satisfy (\ref{eq:2.10}). 

By definition (\ref{eq:2.9}) the criterion (\ref{eq:2.11}) becomes
that for all $n,k\in\mathbb{N}_{0}$,
\[
\frac{1}{L^{n+k+2}}\sum_{j=0}^{k}(-1)^{k+j}\binom{k}{j}m_{n+k-j}L^{j}=\frac{1}{L^{2}}\sum_{j=0}^{k}(-1)^{j}\binom{k}{j}\frac{m_{n+j}}{L^{n+j}}=\frac{1}{L^{n+2}}\sum_{j=1}^{\infty}\frac{1}{\lambda_{j}^{n+2}}\left(1-\frac{1}{L\lambda_{j}}\right)^{k}\ge0,
\]
which is equivalent to 
\begin{equation}
\sum_{j=0}^{k}(-1)^{j}\binom{k}{j}\frac{m_{n+j}}{L^{n+j}}\ge0,\quad\forall n,k\in\mathbb{N}_{0}.\label{eq:2.21}
\end{equation}

\end{rem}

\section{\label{sec:rh}Applications}

\subsection{Riemann $\xi(s)$ function}

Let $s=\sigma+it,\ \sigma,t\in\mathbb{R}$, the Riemann $\xi$-function
is defined by \cite{AndrewsAskeyRoy,Apostol,DLMF,Edwards}
\begin{equation}
\xi(s)=\pi^{-s/2}(s-1)\Gamma\left(1+\frac{s}{2}\right)\zeta(s),\label{eq:3.1}
\end{equation}
where $\Gamma(s)$ and $\zeta(s)$ are the respective analytic continuations
of
\begin{equation}
\Gamma(s)=\int_{0}^{\infty}e^{-x}x^{s-1}dx,\quad\sigma>0\label{eq:3.2}
\end{equation}
 and 
\begin{equation}
\zeta(s)=\sum_{n=1}^{\infty}\frac{1}{n^{s}},\quad\sigma>1.\label{eq:3.3}
\end{equation}
Then $\xi(s)$ is an order $1$ entire function that satisfies the
functional equation $\xi(s)=\xi(1-s)$, which implies the Riemann
Xi function $\Xi(s)=\xi\left(\frac{1}{2}+is\right)$ is an even entire
function of order $1$. It is well-known that all the zeros of $\Xi(s)$
are located within the proper horizontal strip $t\in(-1/2,1/2)$.
The Riemann hypothesis is equivalent to that all the zeros of $\Xi(s)$
are real. 

Since \cite{AndrewsAskeyRoy,Davenport,DLMF,Edwards}
\begin{equation}
\Xi(s)=\int_{-\infty}^{\infty}\Phi(u)e^{ius}du=2\int_{0}^{\infty}\Phi(u)\cos(us)du,\label{eq:3.4}
\end{equation}
where 
\begin{equation}
\Phi(u)=\Phi(-u)=\sum_{n=1}^{\infty}\left(4n^{4}\pi^{2}e^{9u/2}-6n^{2}\pi e^{5u/2}\right)e^{-n^{2}\pi e^{2u}}>0,\label{eq:3.5}
\end{equation}
then, 
\begin{equation}
\xi\left(\frac{1}{2}+s\right)=\sum_{n=0}^{\infty}a_{n}s^{2n},\quad a_{n}=\frac{2}{(2n)!}\int_{0}^{\infty}\Phi(u)u^{2n}du>0.\label{eq:3.6}
\end{equation}
By applying Corollary \ref{cor:4} we obtain the following:
\begin{cor}
For $k\in\mathbb{N}_{0}$ let
\begin{equation}
m_{k}=-\frac{1}{(2k+3)!\cdot2}\left(\frac{\Xi'(s)}{\Xi(s)}\right)^{(2k+3)}\bigg|_{s=0}.\label{eq:3.7}
\end{equation}
Furthermore, for $n\in\mathbb{N}_{0}$ the moment $m_{n}$ can be
expressed as a determinant 
\begin{equation}
m_{n}=(-1)^{n}\det\begin{pmatrix}1 & 0 & 0 & 0 & \dots & \frac{a_{1}^{2}}{a_{0}^{2}}-\frac{2a_{2}}{a_{0}}\\
\frac{a_{1}}{a_{0}} & 1 & 0 & 0 & \dots & \frac{a_{1}a_{2}}{a_{0}^{2}}-\frac{3a_{3}}{a_{0}}\\
\frac{a_{2}}{a_{0}} & \frac{a_{1}}{a_{0}} & 1 & 0 & \dots & \frac{a_{1}a_{3}}{a_{0}^{2}}-\frac{4a_{4}}{a_{0}}\\
\frac{a_{3}}{a_{0}} & \frac{a_{2}}{a_{0}} & \frac{a_{1}}{a_{0}} & 1 & \dots & \frac{a_{1}a_{4}}{a_{0}^{2}}-\frac{5a_{5}}{a_{0}}\\
\vdots & \vdots & \vdots & \vdots & \ddots & \vdots\\
\frac{a_{n}}{a_{0}} & \frac{a_{n-1}}{a_{0}} & \frac{a_{n-2}}{a_{0}} & \frac{a_{n-3}}{a_{0}} & \dots & \frac{a_{1}a_{n+1}}{a_{0}^{2}}-\frac{(n+2)a_{n+2}}{a_{0}}
\end{pmatrix},\label{eq:3.8}
\end{equation}
and it can be computed recursively,
\begin{equation}
(-1)^{n}m_{n}=\frac{a_{1}a_{n+1}}{a_{0}^{2}}-\frac{(n+2)a_{n+2}}{a_{0}}-\sum_{k=0}^{n-1}\frac{(-1)^{k}m_{k}a_{n-k}}{a_{0}},\label{eq:3.9}
\end{equation}
where $a_{j}$s' are defined in (\ref{eq:3.6}). 

Then the Riemann hypothesis holds if and only if there exists a positive
number $L>s_{1}^{-2}$ the sequence $\left\{ m_{k}\right\} _{k=0}^{\infty}$
satisfies 
\begin{equation}
\sum_{j=0}^{k}(-1)^{j}\binom{k}{j}\frac{m_{n+j}}{L^{n+j}}\ge0,\quad\forall n,k\in\mathbb{N}_{0},\label{eq:3.10}
\end{equation}
 where $s_{1}\approx14.1347$ is the zero of $\Xi(s)$ with smallest
positive real part.
\end{cor}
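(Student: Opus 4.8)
The plan is to view $\Xi(s)$ as an instance of the even entire function $g$ of Corollary~\ref{cor:4} (with $\mu=0$), to verify hypothesis~(\ref{eq:2.19}) for it, and then to rewrite the resulting criterion in the scaled form~(\ref{eq:3.10}) via the Remark. By~(\ref{eq:3.6}), $\Xi(s)=\xi\!\left(\tfrac12+is\right)=\sum_{n\ge0}a_n(is)^{2n}=\sum_{n\ge0}a_n(-s^2)^n$ with all $a_n>0$ and $a_0=\xi(1/2)>0$, so $\Xi$ is exactly of the form~(\ref{eq:2.18}) with $\mu=0$, of order $1<2$; also $\Xi(i\sqrt z)=\sum_{n\ge0}a_nz^n$, so $f(z):=\Xi(i\sqrt z)/a_0=\sum_{n\ge0}(a_n/a_0)z^n=\prod_n\bigl(1+z/\lambda_n\bigr)$ with $\lambda_n=z_n^2$ is the normalised genus-$0$ function that is fed to Theorem~\ref{thm:2} inside the proof of Corollary~\ref{cor:4}. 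With this dictionary the determinant~(\ref{eq:3.8}) and the recursion~(\ref{eq:3.9}) are just~(\ref{eq:2.12}) and~(\ref{eq:2.13}) with every $a_j$ replaced by $a_j/a_0$, while~(\ref{eq:3.7}) is~(\ref{eq:2.20}) specialised to $g=\Xi$, which in particular identifies $m_k$ with $\sum_n z_n^{-2k-4}$.

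Next I would check~(\ref{eq:2.19}). A nontrivial zero $\rho$ of $\zeta$ gives the zero $z=-i(\rho-\tfrac12)$ of $\Xi$, with $\Re(z)=\Im(\rho)$ and $\Im(z)=\tfrac12-\Re(\rho)$. Because $\zeta$ has no zero on $(0,1)$, no zero of $\Xi$ is purely imaginary, so in each $\pm$ pair one may take the member $z_n$ with $\Re(z_n)>0$; then $\Re(z_n)=\Im(\rho)\ge s_1\approx14.1347$, and $|\Im(z_n)|<\tfrac12$ since all zeros of $\Xi$ lie in $|\Im(s)|<\tfrac12$. Hence $\Re(z_n^2)=\Re(z_n)^2-\Im(z_n)^2>s_1^2-\tfrac14>1$, so~(\ref{eq:2.19}) holds with $M=\tfrac12$, and $\lambda_n=z_n^2$ satisfies $\inf_n\Re(\lambda_n)>1$. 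Corollary~\ref{cor:4} then gives: the Riemann hypothesis holds (equivalently, all zeros of $\Xi$ are real) if and only if the sequence $(m_k)$ of~(\ref{eq:3.7}) satisfies~(\ref{eq:2.11}), which is precisely~(\ref{eq:3.10}) at $L=1$; this value is admissible since $1>s_1^{-2}$.

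It remains to pass between ``$L=1$'' and ``some $L>s_1^{-2}$''. For the forward implication there is nothing further to do: if RH holds then all $z_n$ are positive reals $\ge s_1>1$, so $m_n=\int_0^1 t^n\,d\rho(t)$ with $\rho=\sum_n z_n^{-4}\delta_{1/z_n^2}$ a finite positive measure supported in $(0,1)$, and hence~(\ref{eq:3.10}) holds at $L=1>s_1^{-2}$. For the reverse implication I would combine monotonicity in $L$ with the Remark. Monotonicity: by the Hausdorff moment theorem,~(\ref{eq:3.10}) at $L$ is equivalent to $m_n/L^n=\int_0^1 t^n\,d\nu_L(t)$ for a positive measure $\nu_L$ on $[0,1]$, and pushing $\nu_L$ forward under $t\mapsto(L/L')t$ gives $m_n/(L')^n=\int_0^1 t^n\,d\widetilde\nu(t)$ for every $L'\ge L$, so~(\ref{eq:3.10}) at $L$ forces~(\ref{eq:3.10}) at every $L'\ge L$. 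The Remark: for $L'\ge L$ large enough that $L'\bigl(s_1^2-\tfrac14\bigr)>1$, the function $f_1(z)=f(z/L')=\prod_n\bigl(1+z/(L'\lambda_n)\bigr)$ is genus~$0$ of order $<1$ with positive coefficients, satisfies~(\ref{eq:1.2}) with the same $\beta_0$ as $f$ and~(\ref{eq:2.10}) since $\inf_n\Re(L'\lambda_n)=L'\inf_n\Re(z_n^2)>1$, and its instance of~(\ref{eq:2.11}) is exactly~(\ref{eq:3.10}) at $L'$ (this is the computation in the Remark that produces~(\ref{eq:2.21})). Therefore, if~(\ref{eq:3.10}) holds for some $L>s_1^{-2}$, it holds for such an $L'$, and Corollary~\ref{cor:3} applied to $f_1$ forces all roots $-L'\lambda_n$ of $f_1$ to be negative; thus $\lambda_n=z_n^2>0$, and, as $\Re(z_n)>0$, every $z_n$ is a positive real, i.e.\ RH holds.

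I expect the genuine content to be this last transfer. The subtlety is that Corollary~\ref{cor:4} is really the $L=1$ assertion, while the bare inequality $L>s_1^{-2}$ does not force $L\inf_n\Re(z_n^2)>1$ without already assuming RH (that infimum could be close to $s_1^2-\tfrac14$), so the monotonicity-in-$L$ step above --- equivalently, the observation that a non-real $\lambda_{n_0}$ yields a non-real scaled root $-L'\lambda_{n_0}\notin(-\infty,-1]$, so that the analytic-continuation contradiction in the proof of Theorem~\ref{thm:2} still runs at arbitrary scale --- is what makes the ``there exists $L$'' formulation correct in both directions. The remaining pieces (the strip and first-zero input behind~(\ref{eq:2.19}), and transcribing~(\ref{eq:2.12})--(\ref{eq:2.13}) and~(\ref{eq:2.20}) for the coefficients $a_j/a_0$) are routine.
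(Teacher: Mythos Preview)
Your proposal is correct and follows the paper's route: specialise Corollary~\ref{cor:4} to $g=\Xi$, verify~(\ref{eq:2.19}) from the strip and first-zero data, read off~(\ref{eq:3.7})--(\ref{eq:3.9}) as instances of~(\ref{eq:2.20}),~(\ref{eq:2.12}),~(\ref{eq:2.13}), and then use the Remark to pass to the scaled form~(\ref{eq:2.21})=(\ref{eq:3.10}); the paper itself gives no proof beyond ``By applying Corollary~\ref{cor:4}''.

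Your extra monotonicity-in-$L$ step, while correct, is not needed here. The subtlety you flag---that $L>s_1^{-2}$ might fail to force $L\inf_n\Re(z_n^2)>1$ because the infimum could drop to $s_1^2-\tfrac14$---does not occur for $\Xi$: the first zero $s_1$ is unconditionally known to be real and simple, and there is no other zero with real part in $(s_1,\sqrt{s_1^2+1/4}\,)$, so the root $-\lambda_1$ of $f$ with largest real part is exactly $-s_1^2$. Hence $\lambda_1=s_1^2>0$ and $\inf_n\Re(\lambda_n)=s_1^2$, so any $L>s_1^{-2}$ already yields $\inf_n\Re(L\lambda_n)=Ls_1^2>1$, and the Remark applies directly. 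Your Hausdorff push-forward argument is a pleasant way to see that the validity of~(\ref{eq:3.10}) is monotone in $L$, but in this application it is not required to close the equivalence.
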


\subsection{Character $\xi(s,\chi)$ function }

For a primitive Dirichlet character $\chi(n)$ modulo $q$, let \cite{AndrewsAskeyRoy,Apostol,Davenport,Edwards}
\begin{equation}
\xi(s,\chi)=\left(\frac{q}{\pi}\right)^{(s+\kappa)/2}\Gamma\left(\frac{s+\kappa}{2}\right)L(s,\chi),\label{eq:3.11}
\end{equation}
where $\kappa$ is the parity of $\chi$ and $L(s,\chi)$ is the analytic
continuation of 
\begin{equation}
L\left(s,\chi\right)=\sum_{n=1}^{\infty}\frac{\chi(n)}{n^{s}},\quad\sigma>1.\label{eq:3.12}
\end{equation}
Then $\xi(s,\chi)$ is an entire function of order $1$ such that
\cite{Davenport}
\begin{equation}
\xi(s,\chi)=\epsilon(\chi)\xi(1-s,\overline{\chi}),\label{eq:3.13}
\end{equation}
where 
\begin{equation}
\epsilon(\chi)=\frac{\tau(\chi)}{i^{\kappa}\sqrt{q}},\quad\tau(\chi)=\sum_{n=1}^{q}\chi(n)\exp\left(\frac{2\pi in}{q}\right).\label{eq:3.14}
\end{equation}
Let 
\begin{equation}
G(s,\chi)=\xi\left(s,\chi\right)\cdot\xi\left(s,\overline{\chi}\right),\label{eq:3.15}
\end{equation}
then
\begin{equation}
G(s,\chi)=\epsilon\left(\chi\right)\cdot\epsilon\left(\overline{\chi}\right)G(1-s,\chi).\label{eq:3.16}
\end{equation}
Since \cite{Davenport}
\begin{equation}
\tau\left(\overline{\chi}\right)=\overline{\tau(\chi)},\quad\left|\tau(\chi)\right|=\sqrt{q},\label{eq:3.17}
\end{equation}
then
\begin{equation}
G(s,\chi)=G(1-s,\chi).\label{eq:3.18}
\end{equation}
Since the entire function $\xi\left(\frac{1}{2}+is,\chi\right)$ has
an integral representation, \cite{Davenport}
\begin{equation}
\xi\left(\frac{1}{2}+is,\chi\right)=\int_{-\infty}^{\infty}e^{isy}\varphi\left(y,\chi\right)dy,\label{eq:3.19}
\end{equation}
where
\begin{equation}
\varphi(y,\chi)=2\sum_{n=1}^{\infty}n^{\kappa}\chi(n)\exp\left(-\frac{n^{2}\pi}{q}e^{2y}+\left(\kappa+\frac{1}{2}\right)y\right),\label{eq:3.20}
\end{equation}
then
\begin{equation}
\xi\left(\frac{1}{2}+is,\chi\right)=\sum_{n=0}^{\infty}i^{n}a_{n}(\chi)s^{n},\label{eq:3.21}
\end{equation}
where
\begin{equation}
a_{n}(\chi)=\int_{-\infty}^{\infty}y^{n}\varphi\left(y,\chi\right)dy.\label{eq:3.22}
\end{equation}
It is known that the fast decreasing smooth function $\varphi(y,\chi)$
satisfies the functional equation \cite{Davenport}
\begin{equation}
\varphi(y,\chi)=\frac{i^{\kappa}\sqrt{q}}{\tau\left(\overline{\chi}\right)}\varphi(-y;\overline{\chi}),\quad y\in\mathbb{R},\label{eq:3.23}
\end{equation}
then for all $n\in\mathbb{N}_{0}$,
\begin{equation}
\begin{aligned} & a_{n}(\overline{\chi})=\int_{-\infty}^{\infty}y^{n}\varphi\left(y,\overline{\chi}\right)dy=(-1)^{n}\int_{-\infty}^{\infty}y^{n}\varphi\left(-y,\overline{\chi}\right)dy\\
 & =\frac{(-1)^{n}\tau\left(\overline{\chi}\right)}{i^{\kappa}\sqrt{q}}\int_{-\infty}^{\infty}y^{n}\varphi\left(y,\chi\right)dy=\frac{(-1)^{n}\tau\left(\overline{\chi}\right)}{i^{\kappa}\sqrt{q}}a_{n}(\chi).
\end{aligned}
\label{eq:3.24}
\end{equation}
Let
\begin{equation}
f(s,\chi)=s^{-2\mu}G\left(\frac{1}{2}+is,\chi\right)=s^{-2\mu}\xi\left(\frac{1}{2}+is,\chi\right)\cdot\xi\left(\frac{1}{2}+is,\overline{\chi}\right),\label{eq:3.25}
\end{equation}
where $\mu\in\mathbb{N}_{0}$ is the least nonnegative integer such
that $a_{\mu}(\chi)\neq0$, then the even entire function $f(s,\chi)$
has the series expansion
\begin{equation}
f(s,\chi)=f(-s,\chi)=\sum_{n=0}^{\infty}(-1)^{n}b_{n}(\chi)s^{2n},\label{eq:3.26}
\end{equation}
where $b_{0}(\chi)=\frac{(-1)^{\mu}\tau\left(\overline{\chi}\right)}{i^{\kappa}\sqrt{q}}a_{\mu}^{2}(\chi)$
and for $n\in\mathbb{N}$,
\begin{equation}
\begin{aligned} & b_{n}(\chi)=\sum_{j=0}^{2n}a_{j+\mu}(\chi)a_{2n-j+\mu}(\overline{\chi})\\
 & =\frac{(-1)^{\mu}\tau\left(\overline{\chi}\right)}{i^{\kappa}\sqrt{q}}\sum_{j=0}^{2n}(-1)^{j}a_{j+\mu}(\chi)a_{2n-j+\mu}(\chi).
\end{aligned}
\label{eq:3.27}
\end{equation}
It is well-known that $\xi(s,\chi)$ is an order $1$ entire function
with infinitely many zeros, all of them are in the horizontal strip
$t\in(-1/2,1/2)$, \cite{Davenport}. Then $f(s,\chi)$ is an order
$1$ even entire function with infinitely many zeros, all of them
are in the horizontal strip $t\in(-1/2,1/2)$. Clearly, all the zeros
of $\xi(s,\chi)$ on the critical line $\sigma=\frac{1}{2}$ if and
only if all the zeros of $f(s,\chi)$ are real. Therefore, the generalized
Riemann hypothesis for $L\left(s,\chi\right)$ is equivalent to that
all the zeros of $f(s,\chi)$ are real. 

By Corollary \ref{cor:4} we have the following:
\begin{cor}
Given a primitive Dirichlet character $\chi$, let $f(s,\chi)$ be
defined as in (\ref{eq:3.25}) and (\ref{eq:3.26}). For any $k\in\mathbb{N}_{0}$
let 
\begin{equation}
m_{k}(\chi)=-\frac{1}{(2k+3)!\cdot2}\left(\frac{f'(s,\chi)}{f(s,\chi)}\right)^{(2k+3)}\bigg|_{s=0}.\label{eq:3.28}
\end{equation}
For any $n\in\mathbb{N}_{0}$,
\begin{equation}
m_{n}=(-1)^{n}\det\begin{pmatrix}1 & 0 & 0 & 0 & \dots & \frac{b_{1}^{2}(\chi)}{b_{0}^{2}(\chi)}-\frac{2b_{2}(\chi)}{b_{0}(\chi)}\\
\frac{b_{1}(\chi)}{b_{0}(\chi)} & 1 & 0 & 0 & \dots & \frac{b_{1}(\chi)b_{2}(\chi)}{b_{0}^{2}(\chi)}-\frac{3b_{3}(\chi)}{b_{0}(\chi)}\\
\frac{b_{2}(\chi)}{b_{0}(\chi)} & \frac{b_{1}(\chi)}{b_{0}(\chi)} & 1 & 0 & \dots & \frac{b_{1}(\chi)b_{3}(\chi)}{b_{0}^{2}(\chi)}-\frac{4b_{4}(\chi)}{b_{0}(\chi)}\\
\frac{b_{3}(\chi)}{b_{0}(\chi)} & \frac{b_{2}(\chi)}{b_{0}(\chi)} & \frac{b_{1}(\chi)}{b_{0}(\chi)} & 1 & \dots & \frac{b_{1}(\chi)b_{4}(\chi)}{b_{0}^{2}(\chi)}-\frac{5b_{5}(\chi)}{b_{0}(\chi)}\\
\vdots & \vdots & \vdots & \vdots & \ddots & \vdots\\
\frac{b_{n}(\chi)}{b_{0}(\chi)} & \frac{b_{n-1}(\chi)}{b_{0}(\chi)} & \frac{b_{n-2}(\chi)}{b_{0}(\chi)} & \frac{b_{n-3}(\chi)}{b_{0}(\chi)} & \dots & \frac{b_{1}(\chi)b_{n+1}(\chi)}{b_{0}^{2}(\chi)}-\frac{(n+2)b_{n+2}(\chi)}{b_{0}(\chi)}
\end{pmatrix}\label{eq:3.29}
\end{equation}
and 
\begin{equation}
(-1)^{n}m_{n}(\chi)=\frac{b_{1}(\chi)b_{n+1}(\chi)}{b_{0}^{2}(\chi)}-\frac{(n+2)b_{n+2}(\chi)}{b_{0}(\chi)}-\sum_{k=0}^{n-1}\frac{(-1)^{k}m_{k}(\chi)b_{n-k}(\chi)}{b_{0}(\chi)},\quad\forall n\in\mathbb{N}_{0},\label{eq:3.30}
\end{equation}
where $b_{k}(\chi),\,k\in\mathbb{N}_{0}$ are defined in (\ref{eq:3.22})
and (\ref{eq:3.27}). 

If 
\begin{equation}
\frac{b_{n}(\chi)}{b_{0}(\chi)}=\sum_{j=0}^{2n}(-1)^{j}\frac{a_{j+\mu}(\chi)}{a_{\mu}(\chi)}\frac{a_{2n-j+\mu}(\chi)}{a_{\mu}(\chi)}>0,\quad\forall n\in\mathbb{N},\label{eq:3.31}
\end{equation}
then the generalized Riemann hypothesis for $L\left(s,\chi\right)$
holds if and only if there exists a positive number $L>s_{1}^{-2}(\chi)$
such that
\begin{equation}
\sum_{j=0}^{k}(-1)^{j}\binom{k}{j}\frac{m_{n+j}(\chi)}{L^{n+j}}\ge0,\quad\forall n,k\in\mathbb{N}_{0},\label{eq:3.32}
\end{equation}
 where $s_{1}(\chi)>0$ is the  zero of $f(s,\chi)$ with the smallest
positive real part. 
\end{cor}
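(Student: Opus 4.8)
The plan is to derive both assertions from Section~\ref{sec:main}: the unconditional identities (\ref{eq:3.29})--(\ref{eq:3.30}) from Theorem~\ref{thm:2}, and the equivalence from Corollary~\ref{cor:4} together with the Remark above, applied to $f(s,\chi)$ after a rescaling. First I would record the properties of $f(s,\chi)$ needed to invoke Corollary~\ref{cor:4}. By the classical facts quoted from \cite{Davenport}, $\xi(s,\chi)$ is entire of order $1$ with all zeros in the strip $|\Im(s)|<\frac{1}{2}$, and $G(s,\chi)=\xi(s,\chi)\xi(s,\overline{\chi})$ satisfies $G(s,\chi)=G(1-s,\chi)$; hence $f(s,\chi)=s^{-2\mu}G\!\left(\frac{1}{2}+is,\chi\right)$ is an even entire function of order $1<2$ with the Maclaurin expansion (\ref{eq:3.26}). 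Since $\overline{\xi(s,\chi)}=\xi(\overline{s},\overline{\chi})$, one checks $\overline{G\!\left(\frac{1}{2}+it,\chi\right)}=G\!\left(\frac{1}{2}-it,\chi\right)=G\!\left(\frac{1}{2}+it,\chi\right)$ for real $t$, so $f(s,\chi)$ has real Maclaurin coefficients; thus the $b_n(\chi)$ of (\ref{eq:3.27}) are real, $b_0(\chi)\neq0$ by the minimality of $\mu$, and (\ref{eq:3.31}) is exactly the statement that $b_0(\chi)b_n(\chi)>0$ for all $n\in\mathbb{N}$ — the coefficient hypothesis of (\ref{eq:2.18}) for the even entire function $z\mapsto z^{2\mu}f(z,\chi)$. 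The nonzero zeros of $f(s,\chi)$ are $\pm z_n$ with $z_n=-i(\rho_n-\frac{1}{2})$, the $\rho_n$ running over the nontrivial zeros of $\xi(s,\chi)\xi(s,\overline{\chi})$; since $\Re(\rho_n)\in(0,1)$ this gives $|\Im(z_n)|<\frac{1}{2}$, and after relabelling within each pair we may assume $\Re(z_n)\ge0$. Thus all of (\ref{eq:2.19}) holds for $f(s,\chi)$ except possibly the condition $\Re(z_n^2)>1$, which for the low-lying zeros fails; this is precisely why the rescaling of the Remark is needed.

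For the identities, applying the determinant formula (\ref{eq:2.12}) and the recursion (\ref{eq:2.13}) of Theorem~\ref{thm:2} to the genus-$0$ function attached to $f(s,\chi)$ — whose normalized Maclaurin coefficients are $a_n=b_n(\chi)/b_0(\chi)$ — and identifying, via (\ref{eq:2.20}) and (\ref{eq:2.9}), the moments $m_k(\chi)$ of (\ref{eq:3.28}) with $\sum_{n=1}^{\infty}z_n^{-2k-4}$, produces (\ref{eq:3.29}) and (\ref{eq:3.30}); these are power-series identities needing only $b_0(\chi)\neq0$. For the equivalence, recall (as observed in the text) that the generalized Riemann hypothesis for $L(s,\chi)$ is the assertion that every $z_n$ is real. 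Following the Remark, pass from the even function $g(z)=z^{2\mu}f(z,\chi)$ to $g_L(z)=g(z/\sqrt{L})$, whose nonzero roots are $\pm\sqrt{L}\,z_n$ and whose parameters are $L\lambda_n=Lz_n^2$; choosing $L>s_1^{-2}(\chi)$ large enough makes $\inf_n\Re(Lz_n^2)>1$, so $g_L$ obeys (\ref{eq:2.19}) with $M=\sqrt{L}/2$ and all the hypotheses of Corollary~\ref{cor:4}. That corollary then gives: all nonzero roots of $g_L$ — equivalently all $z_n$ — are real if and only if the moments of $g_L$, namely $L^{-k-2}m_k(\chi)$, satisfy (\ref{eq:2.11}); and the computation in the Remark shows this last condition is precisely (\ref{eq:3.32}). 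This establishes the claimed equivalence.

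The step I expect to be the main obstacle is the rescaling: one must justify that a single $L>s_1^{-2}(\chi)$ forces $\inf_n\bigl((\Re z_n)^2-(\Im z_n)^2\bigr)\ge L^{-1}>0$ — equivalently that no zero of $\xi(s,\chi)\xi(s,\overline{\chi})$ has real part dominated by its imaginary part and that the infimum is strictly positive. This rests on the fine distribution of the zeros of $L(s,\chi)$ nearest the real axis, and it is here that the precise role of $s_1(\chi)$ (the zero of $f(s,\chi)$ of smallest positive real part) and of the normalization (\ref{eq:3.31}) enter, the argument paralleling the verification of (\ref{eq:1.2}) carried out in the proof of Corollary~\ref{cor:4}. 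Everything else is bookkeeping: tracking the substitution $z=-s^2$, the factor $s^{-2\mu}$, and the passage from the coefficients $a_n(\chi)$ through the $b_n(\chi)$ to the normalized sequence $b_n(\chi)/b_0(\chi)$.
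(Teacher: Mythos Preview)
Your proposal is correct and follows exactly the paper's route: the paper offers no separate proof for this corollary beyond the line ``By Corollary~\ref{cor:4} we have the following,'' and your fleshing-out via Theorem~\ref{thm:2}, Corollary~\ref{cor:4}, and the Remark is precisely what is intended. The rescaling obstacle you flag is real but is glossed over by the paper as well; the condition $L>s_{1}^{-2}(\chi)$ tacitly presumes, as in the Riemann case, that the zero of smallest positive real part already lies on the critical line so that $\lambda_{1}=s_{1}(\chi)^{2}>0$ and the Remark applies verbatim.
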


\end{document}